\documentclass{article}

\usepackage{arxiv}

\usepackage[utf8]{inputenc} % allow utf-8 input
\usepackage[T1]{fontenc}    % use 8-bit T1 fonts
\usepackage{hyperref}       % hyperlinks
\usepackage{url}            % simple URL typesetting
\usepackage{booktabs}       % professional-quality tables
\usepackage{amsfonts}       % blackboard math symbols
\usepackage{nicefrac}       % compact symbols for 1/2, etc.
\usepackage{microtype}      % microtypography
\usepackage{amsmath,amssymb,mathtools}
\usepackage{amsthm}
\theoremstyle{definition}
\newtheorem{rmk}{Remark}

\usepackage{cleveref}       % smart cross-referencing
\usepackage{lipsum}         % Can be removed after putting your text content
\usepackage{graphicx}
\usepackage{natbib}
\usepackage{doi}

\title{Posterior Consistency for Recovering Initial States in Nonlinear Subdiffusion Equations}

\author{ 
Haoyu Lu \\
	School of Mathematics and Statistics\\
	Xi'an Jiaotong University\\
	Xi'an, China 710049 \\
	\texttt{luhaoyu@stu.xjtu.edu.cn} \\
\And
	Shaokang Zu \\
	School of Mathematics and Statistics\\
	Xi'an Jiaotong University\\
	Xi'an, China 710049 \\
	\texttt{incredit1@stu.xjtu.edu.cn} \\
\And
    Junxiong Jia\thanks{Corresponding author} \\
	School of Mathematics and Statistics\\
	Xi'an Jiaotong University\\
	Xi'an, China 710049 \\
	\texttt{jjx323@xjtu.edu.cn} 
}

\renewcommand{\headeright}{}
\renewcommand{\shorttitle}{}

\hypersetup{
pdftitle={A template for the arxiv style},
pdfsubject={q-bio.NC, q-bio.QM},
pdfauthor={David S.~Hippocampus, Elias D.~Striatum},
pdfkeywords={First keyword, Second keyword, More},
}
\usepackage{thmtools}
\newtheorem{theorem}{Theorem}
\newtheorem{lemma}{Lemma}
\newtheorem{assumption}{Assumption}
\newcommand{\EE}{\mathbb{E}}
\newcommand{\PP}{\mathbb{P}}

\begin{document}
\maketitle

\begin{abstract}
	We study the Bayesian recovery of the initial state in a semilinear time-fractional subdiffusion equation from noisy random space-time point observations. A rescaled Gaussian prior based on a Whittle--Mat\'ern process is assigned to the unknown initial condition. We prove the \(H^{2+\kappa}\)-regularity of the solution when the nonlinearity satisfies a Lipschitz condition in the \(H^\kappa\)-norm. We then establish posterior contraction rates for the prediction error in the \(L^2\)-norm and for the parameter in Sobolev norms. The rates are polynomial in the sample size, with exponent depending on the prior smoothness and the spatial dimension.  Moreover, we prove a minimax lower bound by constructing a wavelet-packing set and controlling the Kullback--Leibler divergences. 
\end{abstract}

% keywords can be removed
\keywords{subdiffusion \and Bayesian inverse problems \and posterior contraction  \and minimax lower bound
}

\section{Introduction}\label{sec:introduction}

Inverse problems arise when we seeks to recover an unknown quantity from indirect and noisy measurements. In many applications, including imaging, medicine, materials science, and engineering, the relationship between the unknown parameter and the observed data is governed by partial differential equations (PDEs) \cite{Stuart2010, kaipio2005statistical}. A typical statistical formulation consists of observing the solution of the PDE at finitely many space-time points, with the observations corrupted by random noise \cite{kaipio2005statistical}. 

Bayesian methods provide a natural framework for such statistical inverse problems. A prior distribution is assigned to the unknown parameter, and the data update this prior to a posterior distribution via Bayes' formula. The posterior distribution provides point estimators, such as posterior means, and also gives a way to quantify uncertainty in the reconstruction. The Bayesian approach to inverse problems in infinite-dimensional spaces has been systematically developed in the last two decades; see, for example, \cite{Stuart2010,DashtiStuart2017}. From the frequentist perspective, a central question is whether the posterior distribution is consistent: whrn the data are generated by a fixed true parameter $\theta_0$, does the posterior concentrate around $\theta_0$ as the sample size tends to infinity? A refined version of this question asks for the rate of posterior contraction; see \cite{GhosalVaart2017,gine2021mathematical} for general background on posterior contraction theory.

For nonlinear PDE inverse problems, posterior consistency is particularly delicate. General Bayesian recovery results for PDE models were developed in \cite{Nickl2020Convergence}; related computational and variational aspects for PDE-constrained statistical models have also been studied in recent work.

In recent years, Bayesian approaches have been increasingly applied to a wide range of inverse problems for PDEs. For parabolic equations, Giordano proved a Bernstein--von Mises theorem for recovering the initial heat state in the heat equation using Gaussian series priors\cite{giordano2025bayesian}, and Kekkonen established consistency and optimal contraction rates for recovering the absorption coefficient in the heat equation \cite{Kekkonen2022}. Recently, Kow and Wang established consistency for the inverse problem of determining an unknown potential in a subdiffusion equation \cite{KowWang2025}. Motivated by these developments, the present paper concerns a related but different class of evolution equations, namely time-fractional nonlinear subdiffusion equations. 

The main contributions of this paper are as follows. First, we improve the regularity estimates for the nonlinear subdiffusion forward problem. Second, we establish posterior contraction in the prediction norm and then transfer it to Sobolev norms of the initial state using a conditional stability estimate and interpolation. Third, we prove a minimax lower bound by constructing a wavelet packing of the parameter space and bounding the Kullback--Leibler divergence between the corresponding statistical experiments. These results complement the existing theory for parabolic coefficient recovery \cite{Kekkonen2022} and potential recovery in subdiffusion equations \cite{KowWang2025}.

The rest of this paper is organized as follows. Section~\ref{sec:setup} presents the subdiffusion model and the observation scheme. Section~\ref{sec:forward} collects the forward estimates and stability inequalities to be used in the subsequent analysis. Section~\ref{sec:posterior} introduces the Gaussian prior measure, proves the posterior contraction theorem, and establishes the corresponding minimax lower bound.

\section{Model, notation, and Bayesian formulation}\label{sec:setup}

Throughout the paper, $C$ denotes a positive constant whose value may change from line to line. 
We write $a_N\lesssim b_N$ if $a_N\le Cb_N$ for all sufficiently large $N$, and $a_N\simeq b_N$ if both $a_N\lesssim b_N$ and $b_N\lesssim a_N$ hold. 
For random variables $Z_N$ and deterministic positive numbers $a_N$, the notation $Z_N=O_{\PP}(a_N)$ means that, for every $\epsilon>0$, there exists $M_\epsilon<\infty$ such that $\PP(|Z_N|>M_\epsilon a_N)<\epsilon$ for all sufficiently large $N$. 
Let $\Omega=(0,1)^d$ with $1\le d\le 3$ and $A=-\Delta$ be the negative  Laplacian with homogeneous Dirichlet boundary conditions on $\Omega$. 
Denote its eigenpairs by $\{(\lambda_j,e_j)\}_{j\ge1}$, where $\{e_j\}_{j\ge1}$ forms an orthonormal basis of $L^2(\Omega)$. We use the Hilbert scale
\(
\dot H^s(\Omega)=\left\{v\in L^2(\Omega):\|v\|_{\dot H^s}^2:=\sum_{j=1}^\infty \lambda_j^s |(v,e_j)_{L^2}|^2<\infty\right\}
\) for any \(s\in\mathbb{R}\).
For integer or fractional Sobolev spaces we also use the standard notation $H^s(\Omega)$. 
Throughout, let $\alpha\in(0,1)$ be the fractional order and $T>0$ be fixed. For an initial state $\theta$, we consider the semilinear time-fractional diffusion equation
\begin{equation}\label{eq:PDE}
\left\{
\begin{aligned}
\partial_t^\alpha u+Au&=f(u), && \text{ in } (0,T]\times\Omega,\\
u&=0, && \text{ on }(0,T]\times\partial\Omega,\\
u(0)&=\theta, && \text{ in } \Omega.
\end{aligned}
\right.
\end{equation}
where \(f(u)\) and \(u(0)=\theta\) represent the nonlinear source term and the initial value, respectively. 
Here $\partial_t^\alpha$ denotes the Djrbashian--Caputo derivative
\(
\partial_t^\alpha u(t)=\frac{1}{\Gamma(1-\alpha)}\int_0^t (t-s)^{-\alpha}u'(s)\,ds,
\)
where $\Gamma$ is Euler's Gamma function. 
For each admissible $\theta$, let $u_\theta$ denote the corresponding solution and define the forward map as 
\(\mathcal{G}(\theta)=u_\theta.\)
By means of the Laplace transform, the mild solution of the semilinear
problem can be represented as
\begin{equation}\label{eq:mild-solution}
    u(t)
    = F(t)\theta + \int_0^t E(t-s) f(u(s))\,ds.
\end{equation}
Here \(F(t)\) and \(E(t)\) are the linear solution operators defined by
\(
    F(t)
    = \frac{1}{2\pi \mathrm{i}}
    \int_{\Gamma_{\theta,\sigma}}
    e^{zt} z^{\alpha-1} (z^\alpha + A)^{-1}\,dz, 
    E(t)
    = \frac{1}{2\pi \mathrm{i}}
    \int_{\Gamma_{\beta,\tau}}
    e^{zt} (z^\alpha + A)^{-1}\,dz .
\)
The contour \(\Gamma_{\beta,\tau}\subset\mathbb{C}\) is given by
\(
    \Gamma_{\beta,\tau}
    =
    \{z\in\mathbb{C}: |z|=\tau,\ |\arg z|\le \beta\}
    \cup
    \{z\in\mathbb{C}: z=\rho e^{\pm \mathrm{i}\beta},\ \rho\ge \tau\},
\)
where \(\tau\ge 0\) and \(\pi/2<\beta<\pi/\alpha\), oriented counterclockwise.
According to \cite[Theorems 6.4 and 3.2]{jin2021fractional}, the operators
\(F(t)\) and \(E(t)\) satisfy the following smoothing estimate: for all
\(t>0\),
\begin{equation}\label{eq:smoothing-FE}
    \|A^\nu F(t)v\|_{\dot H^p}
    + t^{1-\alpha}\|A^\nu E(t)v\|_{\dot H^p}
    \le C \min\{t^{-\alpha},t^{-\nu\alpha}\}
    \|v\|_{\dot H^p},
    \quad 0\le \nu\le 1,\quad p\in\mathbb{R},
\end{equation}
which will be used in Section \ref{sec:forward}.

Let $(t_i,x_i)$, $i=1,\ldots,N$, be independent random design points uniformly distributed on $(0,T]\times\Omega$. We observe
\(    Y_i=\mathcal{G}(\theta)(t_i,x_i)+\epsilon_i,
    \ i=1,\ldots,N,
\) where $\epsilon_i\overset{\mathrm{iid}}{\sim}N(0,1)$ and the noises are independent of the design points. We write
\(
    D_N=\{(Y_i,x_i,t_i):1\le i\le N\}
\)
for the full data set. If \(D_N\) is generated by the true parameter $\theta_0$, the corresponding joint law of \(D_N\) is denoted by $\PP^N_{\theta_0}$.
The log-likelihood, up to an additive constant independent of $\theta$, is
\(
    \ell_N(\theta)=-\frac12\sum_{i=1}^N\left(Y_i-\mathcal{G}(\theta)(t_i,x_i)\right)^2.
\)
For a prior distribution $\Pi$ on the parameter space, the posterior law of \(\theta|D_N\) is given by the Bayes' Formula
\begin{align}\label{BayesFormula}
    \frac{d\Pi(\theta|D_N)}{d\Pi}=\frac{e^{\ell_N(\theta)}}{\int e^{\ell_N(\vartheta)}d\Pi(\vartheta)},
\end{align}
which we will analyze to derive upper and lower bounds for the posterior contraction rate.

\section{Forward estimates and stability}\label{sec:forward}

The posterior analysis relies on several analytic properties of the forward map. We first provide a higher-order regularity estimate, which will be used in the interpolation step of the stability argument.
\begin{theorem}[Higher-order regularity]\label{Higher-order regularity}
Assume that \(\theta\in H^2(\Omega)\), and the
nonlinearity \(f\in C^1,f(0)=0\), and satisfies the following Lipschitz condition
\begin{align}\label{fLip2}
    \|f(v_1)-f(v_2)\|_{H^\kappa}
\le L(M)\|v_1-v_2\|_{H^\kappa},\quad \|v_1\|_{H^\kappa},\|v_2\|_{H^\kappa}\leq M, \kappa\in [0,1]. 
\end{align}
Then the solution satisfies the
higher-order regularity estimate
\begin{align}\label{neq:HighOrderRegularity}
    \|u_\theta(t)\|_{H^{2+\kappa}(\Omega)}
\le
C(T,L(\theta),\alpha,\kappa)t^{-\kappa\alpha/2}\|\theta\|_{H^2(\Omega)},\qquad 0< t\le T.
\end{align}
\end{theorem}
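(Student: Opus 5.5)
We work from the mild formulation \eqref{eq:mild-solution} and the smoothing estimate \eqref{eq:smoothing-FE}, identifying $H^{2+\kappa}(\Omega)$-norms with $\dot H^{2+\kappa}$-norms. For $\kappa\in[0,1]$ and the Dirichlet Laplacian on the cube this identification holds, with the proviso that the relevant traces vanish; we assume $\theta\in H^2\cap H^1_0$ so that $\theta\in\dot H^2$. We use the same convention for $f(u)$: we need $f(u)\in \dot H^\kappa$, which for $\kappa<1/2$ is automatic and for $\kappa\ge 1/2$ uses $f(0)=0$ together with $u=0$ on $\partial\Omega$.

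\textbf{Step 1: a priori $H^\kappa$ bound.} First we show that $\sup_{0\le t\le T}\|u(t)\|_{H^\kappa}\le C\|\theta\|_{H^2}$. By \eqref{eq:smoothing-FE} with $\nu=0$ and $p=\kappa$, $\|F(t)\theta\|_{\dot H^\kappa}\le C\|\theta\|_{\dot H^\kappa}$ and $\|E(t-s)g\|_{\dot H^\kappa}\le C(t-s)^{\alpha-1}\|g\|_{\dot H^\kappa}$. The Lipschitz condition \eqref{fLip2} with $v_2=0$ and $f(0)=0$ gives $\|f(u)\|_{H^\kappa}\le L\|u\|_{H^\kappa}$, where $L=L(M)$ and $M$ is a bound on the solution. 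Such an $M$ is available from the standard well-posedness theory, or by a continuation argument, and this is what $L(\theta)$ in the statement encodes. We then obtain
\[
\|u(t)\|_{\dot H^\kappa}\le C\|\theta\|_{\dot H^\kappa}+CL\int_0^t (t-s)^{\alpha-1}\|u(s)\|_{\dot H^\kappa}\,ds,
\]
and the fractional (Henry-type) Gronwall inequality yields the uniform bound $C(T,L,\alpha)\|\theta\|_{H^2}$.

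\textbf{Step 2: the linear part.} Next we bound the term $F(t)\theta$ in $\dot H^{2+\kappa}$. Writing $\|F(t)\theta\|_{\dot H^{2+\kappa}}=\|A^{\kappa/2}F(t)\theta\|_{\dot H^2}$ and applying \eqref{eq:smoothing-FE} with $\nu=\kappa/2$ and $p=2$ gives
\[
\|F(t)\theta\|_{\dot H^{2+\kappa}}\le C\min\{t^{-\alpha},t^{-\kappa\alpha/2}\}\|\theta\|_{\dot H^2}\le C(T)\,t^{-\kappa\alpha/2}\|\theta\|_{H^2}.
\]
This produces exactly the singularity claimed in \eqref{neq:HighOrderRegularity}.

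\textbf{Step 3: the Duhamel term (main obstacle).} The naive estimate $\|A^{1+\kappa/2}E(t-s)f\|\le C(t-s)^{\alpha-1-\alpha}\|f\|$, obtained with $\nu=1$, gives the nonintegrable kernel $(t-s)^{-1}$ when we only know $f(u)\in\dot H^\kappa$. One derivative must therefore be gained from the time regularity of $f(u(s))$. We split
\[
\int_0^t E(t-s)f(u(s))\,ds=\int_0^t E(t-s)\bigl[f(u(s))-f(u(t))\bigr]ds+\Bigl(\int_0^t E(s)\,ds\Bigr)f(u(t)).
\]
For the second term, the identity $A\int_0^t E(s)\,ds=I-F(t)$ gives
\[
\Bigl\|A^{1+\kappa/2}\Bigl(\int_0^t E\Bigr)f(u(t))\Bigr\|\le C\|f(u(t))\|_{\dot H^\kappa}\le CL\|\theta\|_{H^2},
\]
by Step 1. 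For the first term we need a Hölder-in-time estimate of the form
\[
\|u(t)-u(s)\|_{H^\kappa}\le C(t-s)^{\gamma}s^{-\gamma}\|\theta\|_{H^2}\quad\text{for some }\gamma>0.
\]
We would prove this from the mild formulation, using $\|(F(t)-F(s))\theta\|$, the representation $F'(t)=-AE(t)$, and $\theta\in\dot H^2$; in fact $\dot H^2$ data gives roughly $\gamma$ up to $\alpha$, since $\partial_t F(t)\theta=-E(t)A\theta$ has norm $O(t^{\alpha-1})$. Combining this with \eqref{fLip2} and $\nu=1$ bounds the first term by
\[
CL\int_0^t (t-s)^{-1+\gamma}s^{-\gamma}\,ds\,\|\theta\|_{H^2}=C\|\theta\|_{H^2}.
\]
If the Hölder step proves awkward, we would fall back on a bootstrap. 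Using $\nu=1-\epsilon$ first shows that $u\in\dot H^{2+\kappa-2\epsilon}$ with an integrable kernel $(t-s)^{\epsilon\alpha-1}$. We then iterate, interpolating \eqref{fLip2} between $\kappa$-levels, until the exponent $2+\kappa$ is reached.

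Summing the three contributions and absorbing bounded powers of $T$ into the constant gives \eqref{neq:HighOrderRegularity}, with a constant depending on $T$, $L$, $\alpha$ and $\kappa$.
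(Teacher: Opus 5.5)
Your argument is correct, but it handles the nonintegrable $(t-s)^{-1}$ kernel differently from the paper.

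The paper splits at $s=t/2$. On $[0,t/2]$ the kernel is harmless and gives a factor $\ln 2$. On $[t/2,t]$ it writes $AE(t-s)$ as an $s$-derivative of $F(t-s)$ and integrates by parts. This leaves boundary terms $f(u(t))$ and $F(t/2)f(u(t/2))$, plus a remainder $\int_{t/2}^t A^{\kappa/2}F(t-s)f'(u(s))u'(s)\,ds$. The remainder is controlled with the cited bounds $\|u'(s)\|_{L^2}\le Cs^{\alpha-1}\|\theta\|_{H^2}$ and $\sup_t\|u(t)\|_{H^2}\le C_T\|\theta\|_{H^2}$.

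You instead freeze the integrand at $s=t$. The frozen part is handled exactly by $A\int_0^tE(s)\,ds=I-F(t)$. The kernel is then absorbed by a H\"older modulus of $u$ in $H^\kappa$ together with \eqref{fLip2}.

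What each route buys:
\begin{itemize}
\item The paper's route is shorter given the cited derivative estimate. But it differentiates $f(u(s))$ in time, so it needs the chain rule and an $L^2$ bound on $f'(u(s))u'(s)$.
\item Your route needs only the $H^\kappa$-Lipschitz condition at level $\kappa$, at the cost of the H\"older step.
\end{itemize}

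That step follows quickly from the same two cited facts. First, $\|u(t)-u(s)\|_{L^2}\le C\alpha^{-1}(t^\alpha-s^\alpha)\|\theta\|_{H^2}\le C(t-s)^\alpha\|\theta\|_{H^2}$. Interpolating against $\|u(t)-u(s)\|_{H^2}\le C\|\theta\|_{H^2}$ then gives $\|u(t)-u(s)\|_{H^\kappa}\le C(t-s)^{\alpha(1-\kappa/2)}\|\theta\|_{H^2}$, with no $s^{-\gamma}$ weight needed. If you go through the mild formulation instead, two points change. In $\dot H^\kappa$ the linear part gives exponent $\alpha(1-\kappa/2)$, not $\alpha$; any positive exponent suffices. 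The Duhamel part needs $\|E'(r)\|\le Cr^{\alpha-2}$, which is standard but not contained in \eqref{eq:smoothing-FE}.

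Do not rely on the bootstrap fallback. With $f(u)$ controlled only in $H^\kappa$, smoothing with $\nu=1-\epsilon$ always loses an $\epsilon$. At $\kappa=1$, the case used later in the paper, there is no higher level of \eqref{fLip2} to pass to. So the time regularity your H\"older step supplies is genuinely needed to reach $H^{2+\kappa}$.
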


\begin{proof}
 For
\(\theta\in H^2(\Omega)\), the solution admits the representation \eqref{eq:mild-solution}. 
We split the integral into \([0,t/2]\) and \([t/2,t]\). For
\(0\le\kappa\le 1\), the smoothing estimate \eqref{eq:smoothing-FE} gives
\begin{align*}
\|u_\theta(t)\|_{H^{2+\kappa}}
&\le
\|A^{\kappa/2}F(t)\theta\|_{H^2}
+
\int_0^{t/2}
\|AE(t-s)f(u(s))\|_{H^\kappa}\,ds
+
\left\|
\int_{t/2}^t AE(t-s)f(u(s))\,ds
\right\|_{H^\kappa} \\
&\le
C_1t^{-\kappa\alpha/2}\|\theta\|_{H^2}
+
C_1\int_0^{t/2}(t-s)^{-1}\|f(u(s))\|_{H^\kappa}\,ds
+
R(t) \\
&\le
C_1t^{-\kappa\alpha/2}\|\theta\|_{H^2}
+
C_1\ln 2\sup_{0<s<T}\|f(u(s))\|_{H^\kappa}
+
R(t),
\end{align*}
where
\(
R(t):=\left\|\int_{t/2}^t AE(t-s)f(u(s))\,ds\right\|_{H^\kappa}.
\) 
For the singular part near \(s=t\), we use \(AE(t)=F'(t)\) and \(F(0)=I\) from \cite{jin2021fractional} and integrate
by parts:
\[
R(t)
\le
\|f(u(t))\|_{H^\kappa}
+
\|F(t/2)f(u(t/2))\|_{H^\kappa} 
+
\int_{t/2}^t
\|A^{\kappa/2}F(t-s)f'(u(s))u'(s)\|_{L^2}\,ds .
\]
Using the smoothing estimates \eqref{eq:smoothing-FE}, the Lipschitz property \eqref{fLip2},
and the standard estimate from \cite[Lemma 2.3]{wu2025numerical}
\[
\|u'(s)\|_{L^2(\Omega)}
\le C_2 s^{\alpha-1}\|\theta\|_{H^2(\Omega)},
\]
we obtain
\[
\begin{aligned}
R(t)
&\le
2\sup_{0<s<T}\|f(u(s))\|_{H^\kappa}
+
C_2L\|\theta\|_{H^2}
\int_{t/2}^t (t-s)^{-\alpha}s^{\alpha-1}\,ds \\
&\le
2\sup_{0<s<T}\|f(u(s))\|_{H^\kappa}
+
C_2L\|\theta\|_{H^2}.
\end{aligned}
\]
Using
\(
\|f(u(t))\|_{H^\kappa}
\le L
\|u(t)\|_{H^\kappa}
\) from \eqref{fLip2}
and
\(
\sup_{0<t<T}\|u(t)\|_{H^2}
\le C_T\|\theta\|_{H^2}\) from \cite[Lemma 2.3]{wu2025numerical}, 
we arrive at \eqref{neq:HighOrderRegularity}
with \(C(T,L(\theta),\alpha,\kappa)=C_1+T^{\kappa\alpha/2}\left(C_1\ln 2 LC_T+C_2L+2LC_T\right)\).
\end{proof}
The regularity estimate in Theorem \ref{Higher-order regularity} is essential not only for the well-posedness of the forward problem, but also for deriving the conditional stability estimate below.
 We then collect the boundedness, forward Lipschitz continuity, and conditional stability estimates in the following lemma. We will assume the parameters belong to \(H^{\bar\gamma}\), where \(\bar\gamma\) denotes the smoothness of the prior (see Section~\ref{sec:posterior} for more details).
\begin{lemma}[Forward estimates]\label{lemma2}
Let \(f\) satisfy all the conditions required in Theorem \ref{Higher-order regularity}. 
Let \(\theta,\vartheta\in H^{\bar{\gamma}}(\Omega)\) with
\(\bar{\gamma}>0\), and denote by \(u_\theta,u_\vartheta\) the
corresponding solutions of \eqref{eq:PDE}. Then the following estimates hold.

\begin{enumerate}
\item[(i)] The solution is uniformly bounded:
\[
\sup_{t\in(0,T],\,x\in\Omega}|u_\theta(t,x)|<\infty .
\]

\item[(ii)] For all \(\mu\in [0,1]\), the forward map is Lipschitz from \(H^{-\mu}(\Omega)\) to
\(L^2(0,T;L^2(\Omega))\):
\begin{align}
\|u_\theta-u_\vartheta\|_{L^2(0,T;L^2(\Omega))}
\le
C(T,L,\alpha)\|\theta-\vartheta\|_{H^{-\mu}(\Omega)}.
\label{lemma2Lip}
\end{align}

\item[(iii)] Let \(\bar{\gamma}\geq 2\), then there exists \(T_*>0\) such that, for any \(\mu\in[0,1]\),
\begin{align}
\|\theta-\vartheta\|_{H^{-\mu}(\Omega)}
\le
C\|u_\theta-u_\vartheta\|_{L^2(0,T_*;L^2(\Omega))}^{\frac{\kappa+\mu}{2+\kappa}}.
\label{eq:stability}
\end{align}
\end{enumerate}
\end{lemma}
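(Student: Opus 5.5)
For part (i), I will use the mild formulation \eqref{eq:mild-solution} together with Theorem \ref{Higher-order regularity} and Sobolev embedding. Since \(d\le 3\), we have \(H^2(\Omega)\hookrightarrow C(\bar\Omega)\). When \(\bar\gamma\ge 2\), \cite[Lemma 2.3]{wu2025numerical} (already used in the proof of Theorem \ref{Higher-order regularity}) gives \(\sup_{t}\|u_\theta(t)\|_{H^2}\le C_T\|\theta\|_{H^2}\), and (i) follows at once. For \(0<\bar\gamma<2\), I will instead trade regularity for time-singularity. By \eqref{eq:smoothing-FE} with \(\nu\) chosen so that \(2\nu+\bar\gamma>d/2\), we get \(\|F(t)\theta\|_{\dot H^{2\nu+\bar\gamma}}\le Ct^{-\nu\alpha}\|\theta\|_{\dot H^{\bar\gamma}}\). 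The Duhamel term is estimated by a Gronwall argument for weakly singular kernels, using the Lipschitz bound \eqref{fLip2}. This gives a uniform bound on \(t\in[\delta,T]\), and near \(t=0\) we read the uniform bound as the sup over \(\Omega\) of the continuous solution. I expect this low-regularity case to need the standing convention that the prior is supported in \(H^{\bar\gamma}\) with \(\bar\gamma>d/2\). I will state this explicitly rather than fight it.

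For part (ii), set \(w=u_\theta-u_\vartheta\). By \eqref{eq:mild-solution},
\[
w(t)=F(t)(\theta-\vartheta)+\int_0^t E(t-s)\bigl(f(u_\theta(s))-f(u_\vartheta(s))\bigr)\,ds .
\]
Applying \eqref{eq:smoothing-FE} with \(p=-\mu\) and \(\nu=\mu/2\) gives \(\|F(t)(\theta-\vartheta)\|_{L^2}\le Ct^{-\mu\alpha/2}\|\theta-\vartheta\|_{H^{-\mu}}\). For \(\mu\le1\) and \(\alpha<1\), the factor \(t^{-\mu\alpha}\) is integrable on \((0,T)\), so this term is in \(L^2(0,T)\). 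For the nonlinear term, \(\|E(t-s)\|_{L^2\to L^2}\le C(t-s)^{\alpha-1}\). Using \eqref{fLip2} with \(\kappa=0\) and the uniform bound from (i) to fix \(M\), we obtain
\[
\|w(t)\|_{L^2}\le Ct^{-\mu\alpha/2}\|\theta-\vartheta\|_{H^{-\mu}}+CL\int_0^t(t-s)^{\alpha-1}\|w(s)\|_{L^2}\,ds .
\]
The fractional Gronwall inequality (Henry's lemma) then gives \(\|w(t)\|_{L^2}\le C(T,L,\alpha)t^{-\mu\alpha/2}\|\theta-\vartheta\|_{H^{-\mu}}\). Squaring and integrating in time yields \eqref{lemma2Lip}.

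For part (iii), the plan is to combine a backward-type stability estimate for \(w\) with interpolation, using Theorem \ref{Higher-order regularity}. The first step is a short-time lower bound. At small times the linear part dominates, since \(w(t)-F(t)(\theta-\vartheta)=O(t^\alpha)\) in a suitable norm by the Duhamel estimate above. For the linear part, the Mittag-Leffler representation \(F(t)e_j=E_{\alpha,1}(-\lambda_jt^\alpha)e_j\) and the bound \(E_{\alpha,1}(-x)\ge c/(1+x)\) give
\[
\|F(t)(\theta-\vartheta)\|_{L^2}\gtrsim \|\theta-\vartheta\|_{\dot H^{-2}}
\]
uniformly for \(t\le T_*\). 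Choosing \(T_*\) small, the nonlinear perturbation can be absorbed, and we get \(\|\theta-\vartheta\|_{H^{-2}}\le C\|w\|_{L^2(0,T_*;L^2)}\). The second step is interpolation. Writing \(-\mu\) as the convex combination of \(-2\) and \(\kappa\), we have
\[
\|\theta-\vartheta\|_{H^{-\mu}}\le\|\theta-\vartheta\|_{H^{-2}}^{\frac{\kappa+\mu}{2+\kappa}}\|\theta-\vartheta\|_{H^{\kappa}}^{\frac{2-\mu}{2+\kappa}} .
\]
The last factor is bounded by a constant since \(\bar\gamma\ge2\ge\kappa\). This gives \eqref{eq:stability}. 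Theorem \ref{Higher-order regularity} enters here to control the Duhamel remainder in \(H^{2+\kappa}\), which is needed to absorb it in the \(H^{-2}\) scale.

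The main obstacle is the absorption step in (iii). The perturbation term \(\int_0^tE(t-s)(f(u_\theta)-f(u_\vartheta))\,ds\) is controlled by \(\|w\|\) itself, not by \(\|\theta-\vartheta\|_{H^{-2}}\). I will therefore first bound this term by \(CT_*^\alpha\sup_{s\le T_*}\|w(s)\|\). I will then close the argument by using (ii) with \(\mu=2\) replaced by the available \(\mu\le1\) together with a smallness-in-\(T_*\) argument, which yields the bound \(\|\theta-\vartheta\|_{H^{-2}}\lesssim\|w\|_{L^2(0,T_*;L^2)}\) up to constants depending on the a priori \(H^{\bar\gamma}\)-bound.
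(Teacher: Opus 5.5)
Parts (i) and (ii) are fine. In (ii) you reprove the pointwise bound $\|w(t)\|_{L^2}\le Ct^{-\mu\alpha/2}\|\theta-\vartheta\|_{H^{-\mu}}$ that the paper simply imports, and your caveat that (i) needs $\bar\gamma>d/2$ near $t=0$ is justified.

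The gap is the closing step of (iii). Your pointwise inequality
\[
c\|\theta-\vartheta\|_{\dot H^{-2}}\le\|F(t)(\theta-\vartheta)\|_{L^2}\le\|w(t)\|_{L^2}+CL\int_0^t(t-s)^{\alpha-1}\|w(s)\|_{L^2}\,ds,\qquad 0<t\le T,
\]
is correct, but the absorption you propose cannot work, for three reasons.
\begin{itemize}
\item Bounding the remainder by $CT_*^\alpha\sup_{s\le T_*}\|w(s)\|_{L^2}$ puts an $L^\infty(0,T_*;L^2)$ norm on the right. That norm is not controlled by $\|w\|_{L^2(0,T_*;L^2)}$; note that $\|w(s)\|_{L^2}\to\|\theta-\vartheta\|_{L^2}$ as $s\to0$.
\item Feeding in (ii) does not help. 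It bounds $w$ by $\|\theta-\vartheta\|_{H^{-\mu}}$ with $\mu\le1$, a strictly stronger norm than the $H^{-2}$ norm on the left, so nothing can be absorbed.
\item Even a hypothetical $\mu=2$ version, $\|w(s)\|_{L^2}\lesssim s^{-\alpha}\|\theta-\vartheta\|_{\dot H^{-2}}$, would make the remainder of size $\int_0^t(t-s)^{\alpha-1}s^{-\alpha}\,ds=\Gamma(\alpha)\Gamma(1-\alpha)$ times $\|\theta-\vartheta\|_{\dot H^{-2}}$. This does not shrink as $T_*\to0$, so the ``smallness in $T_*$'' you invoke is not available.
\end{itemize}
Your remark that Theorem \ref{Higher-order regularity} is needed to control this remainder is also mistaken: in your route it plays no role.

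No absorption is needed, because the remainder is controlled by $w$ itself, which is exactly what sits on the right of \eqref{eq:stability}. Take the $L^2(0,T_*)$ norm of the pointwise inequality. Then use Young's inequality $\|k*g\|_{L^2(0,T_*)}\le\|k\|_{L^1(0,T_*)}\|g\|_{L^2(0,T_*)}$ with $k(t)=t^{\alpha-1}$ and $g(t)=\|w(t)\|_{L^2}$. This gives
\[
c\sqrt{T_*}\,\|\theta-\vartheta\|_{\dot H^{-2}}\le\left(1+\frac{CLT_*^\alpha}{\alpha}\right)\|w\|_{L^2(0,T_*;L^2)}
\]
for any $T_*\le T$. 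Your interpolation of the parameter between $\dot H^{-2}$ and $\dot H^{\kappa}$ then yields \eqref{eq:stability}.

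So repaired, your argument is a genuinely different route from the paper's. The paper proceeds in three steps:
\begin{itemize}
\item it uses a fixed-time backward stability estimate $\|\theta-\vartheta\|_{\dot H^{-\mu}}\le C\|w(t)\|_{\dot H^{2-\mu}}$ for small $t$;
\item it interpolates the \emph{state} $w(t)$ between $L^2$ and $H^{2+\kappa}$, which is where Theorem \ref{Higher-order regularity} enters;
\item it removes the resulting weight $t^{-\frac{\alpha\kappa}{2}\frac{2-\mu}{2+\kappa}}$ with H\"older in time.
\end{itemize}
You interpolate the \emph{parameter} instead. That needs only the a priori $H^\kappa$ bound on $\theta,\vartheta$, no higher regularity of $u$, and no smallness of $T_*$. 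It also allows interpolating against $H^{\bar\gamma}$ rather than $H^\kappa$, for boundary-compatible (e.g.\ compactly supported) parameters. That gives the exponent $\frac{\bar\gamma+\mu}{\bar\gamma+2}\ge\frac{\kappa+\mu}{2+\kappa}$.
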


\begin{proof}
The uniform boundedness follows from the regularity theory for semilinear
time-fractional diffusion equations \cite[theorem 6.18]{jin2021fractional} and the Sobolev embedding theorem.
Indeed, under the stated assumptions the solution belongs to
\(C((0,T];H^2(\Omega))\), and hence is bounded in space for \(d\le 3\).
This proves (i). 
For (ii), by applying \cite[Lemma 2.4]{wu2025numerical}, we obtain that for \(\mu\in[0,1]\),
\[
\|u_\theta(t)-u_\vartheta(t)\|_{L^2(\Omega)}
\le
C(\alpha,T,L,\mu)t^{-\alpha\mu/2}
\|\theta-\vartheta\|_{H^{-\mu}(\Omega)},\qquad 0<t\le T .
\]
Squaring both sides of the above estimate and integrating over \((0,T)\), we obtain (ii). 
Finally, to prove (iii) we invoke  \cite[Theorem 2.1]{wu2025numerical}, which ensures the existence of \(T_*>0\) such that
\[
\|\theta-\vartheta\|_{\dot H^{-\mu}(\Omega)}
\le
C\|u_\theta(t)-u_\vartheta(t)\|_{\dot H^{2-\mu}(\Omega)},
\qquad 0<t\le T_* .
\]
Theorem~\ref{Higher-order regularity} supplies the high-order bound needed
to interpolate the right-hand side between \(L^2(\Omega)\) and \(H^{2+\kappa}(\Omega)\). Hence the interpolation inequality gives 
\[\|\theta-\vartheta\|_{\dot H^{-\mu}(\Omega)}
\le
C\|u_\theta(t)-u_\vartheta(t)\|_{L^2(\Omega)}
^{\frac{\kappa+\mu}{2+\kappa}}
\|u_\theta(t)-u_\vartheta(t)\|
_{H^{2+\kappa}(\Omega)}
^{\frac{2-\mu}{2+\kappa}}
\le
C\|u_\theta(t)-u_\vartheta(t)\|_{L^2(\Omega)}
^{\frac{\kappa+\mu}{2+\kappa}}
t^{-\frac{\alpha\kappa}{2}\frac{2-\mu}{2+\kappa}}.
\]
Since the left-hand side is independent of \(t\), integrating over
\((0,T_*)\) and using Hölder's inequality gives
\begin{align*}
\|\theta-\vartheta\|_{H^{-\mu}(\Omega)}
\le&
C
\int_0^{T^*}
\|u_\theta(t)-u_\vartheta(t)\|_{L^2(\Omega)}
^{\frac{\kappa+\mu}{2+\kappa}}
t^{-\frac{\alpha\kappa}{2}\frac{2-\mu}{2+\kappa}}dt\\
\leq &C
\|u_\theta(t)-u_\vartheta(t)\|_{L^2(0,T^*,L^2(\Omega)}
^{\frac{\kappa+\mu}{2+\kappa}}
\left[\int_0^{T^*}
t^{-\frac{\alpha\kappa(2-\mu)}{2(2+\kappa)-\kappa-\mu}}dt\right]^{\frac{2(2+\kappa)-\kappa-\mu}{2(2+\kappa)}}.
\end{align*}
Since
\(
\frac{\alpha\kappa(2-\mu)}{2(2+\kappa)-\kappa-\mu}<\frac{2}{3}<1,
\)
the time singularity is integrable. This completes the proof of (iii).
\end{proof}
\begin{rmk}
This lemma is required for the next theorem on posterior contraction rates. The first two conditions, (i) and (ii), yield the contraction rate for the solution. The third condition, a stability estimate, allows us to convert the contraction rate for the solution into that for the parameter.
Moreover, we mainly employ the case \(\mu=\kappa=1\) in \eqref{eq:stability}, which leads to a sharper contraction-rate upper bound.
\end{rmk}

\section{Upper and lower bound of the posterior contraction rate }\label{sec:posterior}
In this section we denote by $\mathcal{L}(Z)$ the law of a random variable $Z$.
To arrive at the general posterior contraction theorem, we require the following assumptions on the prior:

\begin{assumption}\label{Assumption1}
Let \(\Pi'\) be a centred Gaussian Borel probability measure on the linear space \(\Theta\subset L^2\) with RKHS \(\mathcal{H}\). Suppose further that \(\Pi'(\mathcal{R})=1\) for some separable normed linear subspace \((\mathcal{R}, \|\cdot\|_{\mathcal{R}})\) of \(\Theta\).
\end{assumption}

For \(\gamma > d/2\), we can define a Whittle--Matérn process  
\(
\mathcal{M} = \{ \mathcal{M}(x) : x \in \Omega \}.
\)
By \citep[theorem B.1.3]{nickl2023bayesian}, the RKHS of \(\mathcal{L}(\mathcal{M})\) is \(H^\gamma(\Omega)\) and \(\mathcal{L}(\mathcal{M})\) is supported on \(H^{\bar\gamma}(\Omega)\) for \(0\leq \bar\gamma<\gamma-d/2\). 
Moreover we assume that \(\bar\gamma > 1 + d/2\), then by the Sobolev embedding theorem, one can consider \(\mathcal{M}\) as a \(C^1\)-smooth version.

 Assume
 the true parameter \(\theta_0\in H^{\bar{\gamma}}(\Omega)\) with \(\bar{\gamma}>0\) and
 compact support \( \text{supp}(\theta_0)\subset \Omega\). 
To enforce a certain boundary behaviour of the prior, we choose a smooth cut-off function \(\chi \in C_0^\infty(\Omega)\) such that \(\chi = 1\) on \(\mathcal{K}\), and define \(\mathcal{M}' = \chi \mathcal{M}\). Then \(\Pi': = \mathcal{L}(\mathcal{M}')\) is a centred Gaussian Borel probability measure supported on \(H_0^{\bar\gamma}(\Omega)\). The RKHS of \(\Pi'\) is given by
\(
\mathcal{H} = \{ \chi F : F \in H^\gamma(\Omega) \}
\).This satisfies the requirement of Assumption~\ref{Assumption1}, where the regularity set is $\mathcal{R}=H^{\bar{\gamma}}$ and $\Theta=L^2$.
Finally, let \(\Pi_N\) be the prior obtained by rescaling \(\Pi'\), so that its RKHS is \(\mathcal{H}_N\) with norm \(\|\cdot\|_{\mathcal{H}_N} = \sqrt{N}\,\delta_N \|\cdot\|_{\mathcal{H}}\), where the scaling parameter \(\delta_N\) will be chosen later.
Note that, as a set, the RKHS \(\mathcal{H}_N\) coincides with \(\mathcal{H}\), with the norm rescaled.
 
We now state the upper bound.
\begin{theorem}[Posterior contraction]
\label{thm:posterior_contraction}
Let \(
-\mu\le \xi\le \bar{\gamma}\).
Suppose that the assumption \ref{Assumption1} and assumptions of Theorem \ref{Higher-order regularity} and \ref{lemma2} hold. Then for all \(b>0\) there exists $L>0$ such that
\begin{equation}\label{eq:prediction_contraction}
\PP^N_{\theta_0}\left(
\Pi_N\left(
\theta:\|\theta\|_{H^{\bar{\gamma}}}\le L,
\ \|u_\theta-u_{\theta_0}\|_{L^2(0,T;L^2)}\le L\delta_N
\ \middle|\ D_N
\right)\leq 1-e^{-bN\delta_N^2}\right)
\to 0, N\to \infty,
\end{equation}
where
\(    \delta_N=N^{-\frac{\gamma+1}{2\gamma+2+d}}.
\)
Consequently, for all  \(b>0\) there exists $L>0$ such that
\begin{equation}\label{eq:parameter_contraction}
\PP^N_{\theta_0}\left(
\Pi_N\left(
\theta:\|\theta-\theta_0\|_{H^\xi(\Omega)}\le L\widetilde\delta_N(\xi)
\ \middle|\ D_N
\right)\leq 1-e^{-bN\delta_N^2}\right)
\to 0, N\to \infty,
\end{equation}
where \(\widetilde\delta_N=\delta_N^{\frac23\frac{\bar{\gamma}-\xi}{\bar{\gamma}+1}}\).
Moreover, whenever the posterior mean is well defined, we have
\begin{equation}\label{eq:posterior_mean_rate}
\left\|\EE^{\Pi_N}[\theta\mid D_N]-\theta_0\right\|_{H^\xi(\Omega)}
=O_{\PP^N_{\theta_0}}\left(\widetilde\delta_N(\xi)\right).
\end{equation}
\end{theorem}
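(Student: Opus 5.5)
We follow the general contraction framework for nonlinear regression with rescaled Gaussian priors in \cite{Nickl2020Convergence,nickl2023bayesian}. The forward map enters only through the two properties supplied by Lemma~\ref{lemma2}: (i) uniform boundedness of $\mathcal{G}(\theta)$ on sup-norm balls, and (ii) Lipschitz continuity $\|\mathcal{G}(\theta)-\mathcal{G}(\vartheta)\|_{L^2(0,T;L^2)}\le C\|\theta-\vartheta\|_{H^{-\mu}}$, which we use with $\mu=1$. Under these, the abstract theorem (e.g.\ \cite[Theorem 1.3.2]{nickl2023bayesian}) gives \eqref{eq:prediction_contraction} once three conditions hold for sets $\Theta_N$: a small-ball condition
\[
\Pi_N\bigl(\theta:\|\mathcal{G}(\theta)-\mathcal{G}(\theta_0)\|_{L^2}\le\delta_N\bigr)\ge e^{-AN\delta_N^2},
\]
an excess-mass condition $\Pi_N(\Theta_N^c)\le e^{-BN\delta_N^2}$ with $B$ large relative to $A+b$, and a metric entropy bound $\log N(\Theta_N,\|\cdot\|_{L^2},\delta_N)\lesssim N\delta_N^2$. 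The link between the $L^2(0,T;L^2)$ distance and the Hellinger/KL geometry of the experiment comes from uniform boundedness of $u_\theta$ on $\Theta_N$, since the design is uniform on $(0,T]\times\Omega$.

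The three conditions are verified as follows.

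\emph{Small ball.} By Lemma~\ref{lemma2}(ii) it suffices to lower bound $\Pi_N(\|\theta-\theta_0\|_{H^{-1}}\le c\delta_N)$. Using $\theta_0\in\mathcal{H}$ (after cut-off, since $\chi=1$ on the support of $\theta_0$), Cameron--Martin gives a shift cost $N\delta_N^2\|\theta_0\|^2_{\mathcal{H}}$. The centred small-ball exponent of the Whittle--Mat\'ern prior in $H^{-1}$ is $\varepsilon^{-d/(\gamma+1)}$. With the rescaling by $\sqrt N\delta_N$, the requirement becomes $(\sqrt N\delta_N\cdot\delta_N)^{-d/(\gamma+1)}\lesssim N\delta_N^2$. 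This is exactly solved by $\delta_N=N^{-(\gamma+1)/(2\gamma+2+d)}$, which explains the rate.

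\emph{Sieve.} Take
\[
\Theta_N=\bigl\{\theta=\theta_1+\theta_2:\|\theta_1\|_{H^{-1}}\le M\delta_N,\ \|\theta_2\|_{\mathcal{H}}\le M,\ \|\theta\|_{H^{\bar\gamma}}\le M\bigr\}.
\]
Borell's isoperimetric inequality controls the first two constraints. The $H^{\bar\gamma}$ constraint holds with exponentially small complement because, under the rescaled prior, $\|\theta\|_{H^{\bar\gamma}}$ has Gaussian tails at scale $(\sqrt N\delta_N)^{-1}$ and $N\delta_N^2\cdot N^{-1}\delta_N^{-2}$ is bounded. This is a Fernique-type estimate combined with the scaling.

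\emph{Entropy.} This follows from Sobolev ball entropy, $\log N(\{\|\cdot\|_{H^\gamma}\le M\},\|\cdot\|_{H^{-1}},\varepsilon)\lesssim\varepsilon^{-d/(\gamma+1)}$, with the same balance as in the small-ball step. This yields \eqref{eq:prediction_contraction}, where the set also carries $\|\theta\|_{H^{\bar\gamma}}\le L$.

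For \eqref{eq:parameter_contraction}, on the event of the first statement we apply Lemma~\ref{lemma2}(iii) with $\mu=\kappa=1$. This gives $\|\theta-\theta_0\|_{H^{-1}}\lesssim\|u_\theta-u_{\theta_0}\|^{2/3}_{L^2(0,T_*;L^2)}\lesssim\delta_N^{2/3}$. One caveat: this uses $T_*\le T$, or else the prediction bound must be restricted to $(0,T_*)$, which is dominated by the full $L^2(0,T)$ norm. We then interpolate between $H^{-1}$ and $H^{\bar\gamma}$, using the uniform bound $\|\theta-\theta_0\|_{H^{\bar\gamma}}\le L+\|\theta_0\|_{H^{\bar\gamma}}$:
\[
\|\theta-\theta_0\|_{H^\xi}\lesssim\|\theta-\theta_0\|_{H^{-1}}^{\frac{\bar\gamma-\xi}{\bar\gamma+1}}\lesssim\delta_N^{\frac23\frac{\bar\gamma-\xi}{\bar\gamma+1}}.
\]
For the posterior mean, apply Jensen. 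Split $\EE^{\Pi_N}[\|\theta-\theta_0\|_{H^\xi}\mid D_N]$ into the contraction event and its complement. On the complement, use Cauchy--Schwarz together with a posterior second-moment bound on $\|\theta\|_{H^{\bar\gamma}}$; this bound follows from the Gaussian tails and the denominator lower bound $\int e^{\ell_N-\ell_N(\theta_0)}d\Pi_N\ge e^{-CN\delta_N^2}$ from the small-ball step, as in \cite{nickl2023bayesian}. The exponentially small posterior mass $e^{-bN\delta_N^2}$ then beats the polynomial rate.

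The main obstacle is the small-ball and entropy calibration in the $H^{-1}$ norm, rather than $L^2$. This is where the gain $\gamma+1$ in the exponent comes from, and it requires that the cut-off prior $\chi\mathcal{M}$ inherits the $H^{-1}$ small-ball asymptotics of $H^\gamma$-balls. We would handle this by comparing $\mathcal{H}$ with $H^\gamma_0$-type balls and citing the standard entropy numbers for Sobolev embeddings into $H^{-1}$ on bounded domains.
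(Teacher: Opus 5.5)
Your route is essentially the paper's. The paper simply invokes \citep[Theorem~2.2.2]{nickl2023bayesian}, checking its conditions through Lemma~\ref{lemma2}(i)--(ii) with Lipschitz order $\mu=1$. You instead unpack the small-ball, sieve and entropy verification that this theorem carries out internally. The rest matches the paper: the transfer via Lemma~\ref{lemma2}(iii) with $\mu=\kappa=1$, the interpolation between $H^{-1}$ and $H^{\bar\gamma}$, and the Jensen/Cauchy--Schwarz posterior-mean argument.

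Three points need correcting.

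\textbf{The small-ball exponent.} The centred small-ball exponent of the prior in $H^{-1}$ is not $\varepsilon^{-d/(\gamma+1)}$. That is the metric entropy exponent of an $H^\gamma$-ball in $H^{-1}$. By the Kuelbs--Li/Li--Linde correspondence, the small-ball exponent is $\varepsilon^{-2d/(2\gamma+2-d)}$, so the lower bound on $\Pi'(\|\theta\|_{H^{-1}}\le\varepsilon)$ that you assert fails for small $\varepsilon$. With the correct exponent, $(\sqrt N\delta_N^2)^{-2d/(2\gamma+2-d)}=N^{d/(2\gamma+2+d)}=N\delta_N^2$ exactly, so the rate survives. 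With the exponent you wrote, the balance is not exact, contrary to your claim.

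\textbf{The hypothesis $\theta_0\in\mathcal H$.} Membership $\theta_0\in\mathcal H$ requires $\theta_0\in H^\gamma$, supported where $\chi=1$. The cut-off does not upgrade $\theta_0\in H^{\bar\gamma}$ to $H^\gamma$, since $\bar\gamma<\gamma-d/2$. This is therefore an extra hypothesis. The paper relies on it implicitly as well, because Nickl's theorem requires $\theta_0\in\mathcal H\cap\mathcal R$.

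\textbf{The posterior-mean step.} The posterior second moment is only bounded by $e^{CN\delta_N^2}$ times a polynomial factor. The posterior mass $e^{-bN\delta_N^2}$ must therefore beat this exponential, not merely the polynomial rate, so you need $b>C$. This is allowed because $b$ is arbitrary.
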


\begin{proof}
We will apply a general posterior contraction theorem for nonlinear PDE inverse problems, stated in \citep[Theorem~2.2.2]{nickl2023bayesian}, with parameter space \(\Theta = H_0^1\) and regularisation space \(\mathcal{R} = H_0^{\bar\gamma}(\Omega)\), to obtain the prediction contraction \eqref{eq:prediction_contraction}. 
In view of (i) and (ii) in Lemma \ref{Higher-order regularity}, we can verify \citep[Condition 2.1.1]{nickl2023bayesian} for \(\kappa = \mu, \mathcal{X} = [0,T]\times\Omega,\) and \(V = W\). Furthermore, \citep[Condition 2.2.1]{nickl2023bayesian} with such \(\mathcal{R}\) follows from Assumption \ref{Assumption1}.
 Thus \eqref{eq:parameter_contraction} follows, and it remains to transfer the prediction rate to the parameter space. The interpolation inequality gives 
\[
\|\theta-\theta_0\|_{H^\xi}
\le
\|\theta-\theta_0\|_{H^{-\mu}}^m
\|\theta-\theta_0\|_{H^{\bar{\gamma}}}^{1-m},
\qquad
m=\frac{\bar{\gamma}-\xi}{\bar{\gamma}+\mu},\quad -\mu\le\xi\le\bar{\gamma} .
\]
On the event in \eqref{eq:prediction_contraction}, both $\theta$ and $\theta_0$ are bounded in $H^{\bar{\gamma}}$, hence the stability estimate \eqref{eq:stability} in Lemma \ref{lemma2}.(iii) gives
\[
\|\theta-\theta_0\|_{H^\xi}
\lesssim
\|u_\theta-u_{\theta_0}\|_{L^2(0,T;L^2)}^{m{\frac{\kappa+\mu}{2+\kappa}}}
= \delta_N^{{\frac{\kappa+\mu}{2+\kappa}}\frac{\bar{\gamma}-\xi}{\bar{\gamma}+\mu}}.
\]
Choosing \(\mu=1\) and \(\kappa=1\) maximizes the exponent, yielding the fastest contraction rate from this argument and completing the proof of \eqref{eq:parameter_contraction}. 
The posterior mean bound \eqref{eq:posterior_mean_rate} follows from the same contraction estimate together with standard posterior moment arguments and Fernique-type integrability for the Gaussian prior.
\end{proof}

Assume the true parameter satisfy
\begin{align}\label{true theta condition}
    \theta_0\in 
\Theta_{\bar{\gamma}}(B):=\{\theta\in H^{\bar{\gamma}}(\Omega):\|\theta\|_{H^{\bar{\gamma}}}\le B\}.
\end{align}
We finally show that the recovery of the initial state cannot be faster than a minimax lower bound.

\begin{theorem}[Minimax lower bound]\label{thm:lower_bound}
Assume the forward Lipschitz estimate \eqref{lemma2Lip}. Let $-1\le\xi<\bar\gamma$. There exists $C>0$ such that, for every sufficiently small $\epsilon>0$,
\begin{equation}\label{eq:minimax_theta}
\liminf_{N\to\infty}
\inf_{\widehat\theta_N}
\sup_{\theta\in\Theta_{\bar\gamma}(B)}
\PP^N_\theta\left(
\|\widehat\theta_N-\theta\|_{H^\xi(\Omega)}
>CN^{-\frac{\bar\gamma-\xi}{2\bar\gamma+2\mu+d}}
\right)
\ge 1-\epsilon,
\end{equation}
where the infimum is taken over all measurable estimators $\widehat\theta_N=\widehat\theta_N(D_N)$.
\end{theorem}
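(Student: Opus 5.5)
We use the standard reduction of minimax lower bounds to multiple hypothesis testing, in the form of Fano's lemma or Tsybakov's Theorem 2.5. Suppose we find hypotheses $\theta_0,\theta_1,\dots,\theta_M\in\Theta_{\bar\gamma}(B)$ with $M\to\infty$ such that two conditions hold. The first is the separation condition $\|\theta_i-\theta_k\|_{H^\xi}\ge 2s_N$ for $i\neq k$, where $s_N\simeq N^{-\frac{\bar\gamma-\xi}{2\bar\gamma+2\mu+d}}$. The second is the Kullback--Leibler condition $\frac1M\sum_{i}\mathrm{KL}(\PP^N_{\theta_i},\PP^N_{\theta_0})\le c\log M$ with $c$ small. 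Then any estimator fails to lie within $s_N$ of the truth with probability at least $1-\epsilon$, uniformly over the class. Choosing $c$ small as a function of $\epsilon$ gives \eqref{eq:minimax_theta}.

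\textbf{Construction.} Fix a compactly supported, sufficiently smooth (e.g.\ Daubechies) wavelet basis $\{\psi_{jl}\}$ on $\mathbb{R}^d$, and fix a resolution level $j$ with $2^{j}\simeq N^{\frac{1}{2\bar\gamma+2\mu+d}}$. Take the roughly $2^{jd}$ translates $\psi_{jl}$ whose supports lie in a fixed compact $\mathcal K\subset\Omega$, and put
\[
\theta_m=\eta\,2^{-j(\bar\gamma+d/2)}\sum_{l}m_l\psi_{jl},\qquad m\in\{0,1\}^{2^{jd}}.
\]
By the Varshamov--Gilbert bound we may restrict to a subfamily of $M\ge 2^{c_0 2^{jd}}$ vectors $m$ with pairwise Hamming distance at least $2^{jd}/8$. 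The wavelet characterization of Besov/Sobolev norms then gives the required norm bounds. First, $\|\theta_m\|_{H^{\bar\gamma}}\lesssim \eta\, 2^{-jd/2}\big(\sum_l m_l^2\big)^{1/2}\le \eta\le B$. Second, for $i\neq k$ we have $\|\theta_i-\theta_k\|_{H^\xi}\gtrsim \eta\,2^{-j(\bar\gamma-\xi)}\simeq s_N$. This lower bound holds for every $\xi\ge -1$, including negative $\xi$, because disjointly supported wavelets at a single level are quasi-orthogonal in every $H^{s}$ norm.

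\textbf{KL control.} With Gaussian noise and uniform design, independence over the $N$ observations gives
\[
\mathrm{KL}(\PP^N_{\theta_i},\PP^N_{\theta_0})=\frac{N}{2T|\Omega|}\|u_{\theta_i}-u_{\theta_0}\|^2_{L^2(0,T;L^2)}.
\]
By the forward Lipschitz estimate \eqref{lemma2Lip} this is at most $CN\|\theta_i-\theta_0\|_{H^{-\mu}}^2$. The wavelet norm equivalence gives
\[
\|\theta_i-\theta_0\|_{H^{-\mu}}^2\lesssim \eta^2 2^{-2j(\bar\gamma+\mu)}2^{-jd}\cdot 2^{jd}=\eta^2 2^{-2j(\bar\gamma+\mu)} .
\]
Hence the KL divergence is at most $C\eta^2 N2^{-2j(\bar\gamma+\mu)}\simeq C\eta^2 2^{jd}\lesssim C\eta^2\log M$, using the choice of $j$. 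Taking $\eta$ small makes this at most $c\log M$, and the choice of $j$ also balances the separation to $s_N$, which yields the exponent $\frac{\bar\gamma-\xi}{2\bar\gamma+2\mu+d}$.

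\textbf{Main obstacle.} The delicate step is the two-sided norm equivalence for the wavelet sums. We need the upper bound in $H^{-\mu}$ (a negative-order norm) and the lower bound in $H^{\xi}$, both on the bounded domain with zero boundary values. We plan to handle this by taking wavelets supported inside $\mathcal K\Subset\Omega$, so that they extend by zero to $\mathbb{R}^d$. Then $\|\cdot\|_{H^s(\Omega)}$ and $\|\cdot\|_{\dot H^s}$ are comparable with the corresponding $\mathbb{R}^d$ norms for $|s|\le\bar\gamma$. For the negative-order upper bound we argue by duality, testing against $H^{\mu}_0$ functions and using vanishing moments of the wavelets. A further technical point is that \eqref{lemma2Lip} is applied to perturbations around $\theta_0$, so its constant must be uniform over $\Theta_{\bar\gamma}(B)$. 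This holds because all hypotheses lie in a fixed ball, on which $L(M)$ is bounded.
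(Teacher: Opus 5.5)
Your proposal follows the paper's proof essentially step for step: a single-level hypercube of disjointly supported Daubechies wavelets pruned by Varshamov--Gilbert, separation in $H^\xi$ and the $H^{-\mu}$ bound from the wavelet characterisation of Sobolev norms, KL control through the forward Lipschitz estimate \eqref{lemma2Lip}, and $2^j\simeq N^{1/(2\bar\gamma+2\mu+d)}$ fed into the Fano-type testing reduction, with only immaterial variations (coefficients in $\{0,1\}$ centred at $0$ instead of signs $\pm1$ around $\theta_0$) and somewhat more care than the paper about the $1/(T|\Omega|)$ normalisation, negative-order norms on $\Omega$, and uniformity of the Lipschitz constant. One point you share with the paper: the amplitude $\eta$ (the paper's $\kappa$) must be taken small depending on $\epsilon$ to make the KL ratio small, so the constant $C$ you actually obtain depends on $\epsilon$ (roughly $C\lesssim\sqrt{\epsilon}$), whereas the statement's quantifiers literally assert a single $C$ valid for all small $\epsilon$.
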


\begin{proof}
The proof follows the standard testing reduction, for example \citep[Theorem~6.3.2]{gine2021mathematical}. We construct a finite family of parameters that are well separated in $H^\xi$ while the corresponding \(\PP^N\) remain close in Kullback--Leibler divergence.

Given \(j\), let $\{\Psi_{j,r}\}_{r=1}^{n_j}$ be compactly supported Daubechies wavelets with mutually disjoint supports contained in $\Omega$, where $n_j\simeq 2^{jd}$. 
For $b_{m,\cdot}\in\{-1,1\}^{n_j}$ define
\(
    h_m(x)=\kappa 2^{-j(\bar\gamma+d/2)}\sum_{r=1}^{n_j} b_{m,r}\Psi_{j,r}(x),
\) and \(\theta_m=\theta_0+h_m.\) 
For sufficiently small $\kappa>0$, all $\theta_m$ belong to $\Theta_{\bar\gamma}(B)$. By the Varshamov--Gilbert bound in \citep[example 3.1.4]{gine2021mathematical}, one can choose \(b_{m,r}\in \{-1,1\},1\leq m\leq M_j\) with $M_j\ge 3^{n_j/4}$ such that
\(
    \sum_{r=1}^{n_j}(b_{m,r}-b_{m',r})^2\gtrsim n_j,
\) for any \(m\ne m'.\)
With this inequality and the wavelet characterisation of Sobolev norms, we have
\begin{equation}\label{eq:wavelet_sep}
    \|\theta_m-\theta_{m'}\|_{H^\xi}^2
    \simeq 
    \sum_{r}2^{2j\xi}\langle h_m-h_{m'},\Psi_{j,r}\rangle_{L^2}^2
    \gtrsim 2^{2j\xi}\kappa^2 2^{-2j(\bar\gamma+d/2)}n_j
    =\kappa^2 2^{-2j(\bar\gamma-\xi)},
    \qquad m\ne m'.
\end{equation}

On the other hand, the Kullback-Leibler divergence satisfy
\[\mathrm{KL}(\PP^N_{\theta_m},\PP^N_{\theta_0})
    =\sum_{i=1}^N\EE_{\theta_m}\ln \frac{d\PP^N_{\theta_m}}{d\PP^N_{\theta_0}}
    =\frac{1}{2\sigma^2}\sum_{i=1}^N\EE_{\theta_m}
    (u_{\theta_m}-u_{\theta_0}(x_i,t_i))^2=
\frac{N}{2\sigma^2}\|u_{\theta_m}-u_{\theta_0}\|_{L^2(0,T;L^2)}^2.
\]
Using the Lipschitz property of the forward map \eqref{lemma2Lip}, then a computation similar to \eqref{eq:wavelet_sep} gives 
\[
\|u_{\theta_m}-u_{\theta_0}\|_{L^2(0,T;L^2)}^2
\lesssim \|h_m\|_{H^{-\mu}}^2
\lesssim \kappa^2 2^{-2j(\bar\gamma+\mu)}.
\]
Choosing $2^j\simeq N^{1/(2\bar\gamma+2\mu+d)}$, we obtain
\begin{align}\label{eq:KLInequality}
\mathrm{KL}(\PP^N_{\theta_m},\PP^N_{\theta_0})
\lesssim \kappa^2 2^{jd}\simeq \kappa^2n_j
\lesssim \epsilon\log M_j
\end{align} for any small \(\epsilon>0\) by taking $\kappa$ sufficiently small.
 Thus far, we have proved that \(\{h_m\}_{m=1}^{M_j}\) is an \(N^{-\frac{\bar\gamma-\xi}{2\bar\gamma+2\mu+d}}\)-separated set in \(\Theta_{\bar{\gamma}}(B)\) in the sense of \eqref{eq:wavelet_sep}, while  the KL computation \eqref{eq:KLInequality} shows that their observational laws can be made arbitrarily close. Thus we have many candidates that are statistically indistinguishable, which is exactly what the lower bound requires. Finally \eqref{eq:minimax_theta} is given by \citep[Theorem~6.3.2]{gine2021mathematical}.

\end{proof}

\begin{rmk}
 Whether the upper and lower bounds on the contraction rate match is a challenging question, as it effectively asks for the exact rate as the sample size grows. This depends on the prior, the estimator, and the conditional stability estimate. Matching bounds were obtained in \cite{Kekkonen2022} for the posterior mean under Lipschitz stability with a truncated Gaussian prior, while \cite{KowWang2025} found that under Hölder stability the same prior and estimator do not yield matching bounds. \cite{Furuya2024} observed non-matching bounds under logarithmic stability, whereas \cite{Nickl2020Convergence} proved matching bounds for the MAP estimator under Lipschitz stability with a Gaussian prior. Whether matching bounds hold for the subdiffusion problem studied here will be an interesting topic for future work.
\end{rmk}

\section*{Acknowledgments}
This work was partially supported by the National Natural Science Foundation of China (Grants No. 12322116, 12271428, 12326606).

\bibliographystyle{unsrtnat}
\bibliography{references}  %%% Uncomment this line and comment out the ``thebibliography'' section below to use the external .bib file (using bibtex) .

%%% Uncomment this section and comment out the \bibliography{references} line above to use inline references.
% \begin{thebibliography}{1}

% 	\bibitem{kour2014real}
% 	George Kour and Raid Saabne.
% 	\newblock Real-time segmentation of on-line handwritten arabic script.
% 	\newblock In {\em Frontiers in Handwriting Recognition (ICFHR), 2014 14th
% 			International Conference on}, pages 417--422. IEEE, 2014.

% 	\bibitem{kour2014fast}
% 	George Kour and Raid Saabne.
% 	\newblock Fast classification of handwritten on-line arabic characters.
% 	\newblock In {\em Soft Computing and Pattern Recognition (SoCPaR), 2014 6th
% 			International Conference of}, pages 312--318. IEEE, 2014.

% 	\bibitem{hadash2018estimate}
% 	Guy Hadash, Einat Kermany, Boaz Carmeli, Ofer Lavi, George Kour, and Alon
% 	Jacovi.
% 	\newblock Estimate and replace: A novel approach to integrating deep neural
% 	networks with existing applications.
% 	\newblock {\em arXiv preprint arXiv:1804.09028}, 2018.

% \end{thebibliography}

\end{document}